\renewcommand\eqref[1]{(\ref{#1})} %Need with hyperref
\title[Geometric Hardy and Hardy-Sobolev inequalities]{Geometric Hardy and Hardy-Sobolev inequalities on Heisenberg groups}
\author[Michael Ruzhansky]{Michael Ruzhansky}
\address{\href{www.ruzhansky.org}{Michael Ruzhansky:}
	\endgraf
	Department of Mathematics 
	\endgraf
	Ghent University, Belgium
	\endgraf
	and
	\endgraf
	School of Mathematical Sciences
		\endgraf Queen Mary University of London 
			\endgraf
		United Kingdom
			\endgraf
	{\it E-mail address} {\rm Michael.Ruzhansky@ugent.be}
}
\author[Bolys Sabitbek]{Bolys Sabitbek}
\address{ \href{https://www.researchgate.net/profile/Bolys_Sabitbek3}{Bolys Sabitbek:}
	\endgraf
	Institute of Mathematics and Mathematical Modeling 
	\endgraf
	125 Pushkin Street., Almaty, 050010
	\endgraf
	Kazakhstan
	\endgraf
	and
	\endgraf 
	Department of Mechanics and Mathematics
	\endgraf 
	Al-Farabi Kazakh National University 
	\endgraf
	71 al-Farabi Ave., Almaty, 050040 
	\endgraf Kazakhstan
	\endgraf
	{\it E-mail address} {\rm b.sabitbek@math.kz}
}
\author[Durvudkhan Suragan]{Durvudkhan Suragan}
\address{\href{https://sst.nu.edu.kz/en/durvudkhan-suragan-phd/}{Durvudkhan Suragan:}
	\endgraf
	Department of Mathematics
	\endgraf
	Nazarbayev University
	\endgraf
	53 Kabanbay batyr Ave., Astana, 010000
	\endgraf
	Kazakhstan
	\endgraf
	{\it E-mail address} {\rm durvudkhan.suragan@nu.edu.kz}
}
\subjclass{35A23, 35H20, 35R03.}
\keywords{stratified groups; Heisenberg groups; geometric Hardy inequality; half-space; sharp constant;}
\thanks{The first
 author was supported by the EPSRC Grant 
EP/R003025/1, by the Leverhulme Research Grant RPG-2017-151, and by the FWO Odysseus grant. The second author was supported by Nazarbayev University Faculty Development Competitive Research Grants N090118FD5342. The third author was supported in parts by the MESRK grant AP05130981. No new data was collected or generated during the course of this research.}
\newtheoremstyle{theorem}%name
{10pt}          % space above
{10pt}  % space below
{\sl}  % bofy font
{\parindent}     % ident - empty=no indent,  \parindent= paragraph indent
{\bf}  % thm head font
{. }    % punctuation after thm head
{ }    % space after thm head: `` ``=normal \newline=linebreak
{}     % thm head specification
\theoremstyle{theorem}
\numberwithin{equation}{section}
\theoremstyle{plain}
\newtheorem{thm}{Theorem}[section]
\newtheorem{cor}[thm]{Corollary}
\newtheorem{lem}[thm]{Lemma}
\theoremstyle{definition}
\newtheorem{rem}[thm]{Remark}
\newcommand{\G}{\mathbb{G}}
\newcommand{\La}{\mathcal{L}}
\newtheoremstyle{defi}%name
{10pt}          % space above
{10pt}  % space below
{\rm}  % bofy font
{\parindent}     % ident - empty=no indent,  \parindent= paragraph indent
{\bf}  % thm head font
{. }    % punctuation after thm head
{ }    % space after thm head: `` ``=normal \newline=linebreak
{}     % thm head specification
\theoremstyle{defi}
\begin{document}
		\begin{abstract}
		In this paper, we present the geometric Hardy inequality for the sub-Laplacian in the half-spaces on the stratified groups. As a consequence, we obtain the following geometric Hardy inequality in a half-space on the Heisenberg group with a sharp constant
		\begin{equation*}
		\int_{\mathbb{H}^+} |\nabla_{H}u|^p d\xi \geq \left(\frac{p-1}{p}\right)^p \int_{\mathbb{H}^+}  \frac{\mathcal{W}(\xi)^p }{dist(\xi,\partial \mathbb{H}^+)^p} |u|^p d\xi, \,\, p>1,
		\end{equation*}
		which solves the conjecture in the paper \cite{Larson}. Also, we obtain a version of the Hardy-Sobolev inequality in a half-space on the Heisenberg group
		\begin{equation*}
		\left( \int_{\mathbb{H}^+} |\nabla_{H} u|^p d\xi - \left(\frac{p-1}{p}\right)^p \int_{\mathbb{H}^+} \frac{ \mathcal{W}(\xi)^p}{dist(\xi,\partial \mathbb{H}^+)^p} |u|^p d\xi \right)^{\frac{1}{p}} \geq C \left( \int_{\mathbb{H}^+} |u|^{p^*} d\xi\right)^{\frac{1}{p^*}},
		\end{equation*}
		where $dist(\xi,\partial \mathbb{H}^+)$ is the Euclidean distance to the boundary, $p^* := Qp/(Q-p)$, $2\leq p<Q$, and     
		$$\mathcal{W}(\xi)=\left(\sum_{i=1}^{n}\langle X_i(\xi), \nu \rangle^2+\langle Y_i(\xi), \nu \rangle^2\right)^{\frac{1}{2}},$$
		is the angle function. For $p=2$, this gives the Hardy-Sobolev-Maz'ya inequality on the Heisenberg group.  
	\end{abstract}
	\maketitle
\section{Introduction}
Let us recall the Hardy inequality in the half-space 
\begin{equation}\label{HS1}
\int_{\mathbb{R}^n_+} |\nabla u|^p dx \geq \left( \frac{p-1}{p}\right)^p \int_{\mathbb{R}^n_+} \frac{|u|^p}{x_n^p}dx,\quad p>1,
\end{equation}
for every function $u \in C_0^{\infty}(\mathbb{R}^n_+)$, where $\nabla$ is the usual Euclidean gradient. The half-space is defined by $\mathbb{R}^n_+:= \{(x',x_n)|x':=(x_1,\ldots,x_{n-1})\in\mathbb{R}^{n-1}, x_n>0 \},\, n \in \mathbb{N}.$ There is a number of studies related to inequality \eqref{HS1}, see e.g. \cite{Ancona},  \cite{Avka_Lap}, \cite{Davies}, \cite{Opic-Kuf}.

Filippas, Maz'ya and Tertikas in \cite{FMT} have established the Hardy-Sobolev inequality in the following form
\begin{equation}\label{HS2}
\left(\int_{\mathbb{R}^n_+} |\nabla u|^p dx - \left( \frac{p-1}{p}\right)^p \int_{\mathbb{R}^n_+} \frac{|u|^p}{x_n^p}dx\right)^{\frac{1}{p}} \geq C \left( \int_{\mathbb{R}^n_+} |u|^{p^*} dx\right)^{ \frac{1}{p^*}},
\end{equation}
for all function $u \in C_0^{\infty}(\mathbb{R}^n_+)$, where $p^*=\frac{np}{n-p}$ and $2\leq p<n$. There is a different proof of this inequality by Frank and Loss \cite{FL}.

The Hardy inequality in the half-space on the Heisenberg group was shown by Luan and Young \cite{LY} in the form
\begin{equation}\label{LY_ineq}
\int_{\mathbb{H}^+} |\nabla_{H}u|^2 d\xi \geq \frac{1}{4} \int_{\mathbb{H}^+}  \frac{|x|^2+|y|^2 }{t^2} |u|^2 d\xi.
\end{equation} 
An alternative proof of this inequality was given by Larson in \cite{Larson}, where the author generalised it to any half-space of the Heisenberg group, 
\begin{equation*}
	\int_{\mathbb{H}^+} |\nabla_{H}u|^2 d\xi \geq \frac{1}{4} \int_{\mathbb{H}^+}  \frac{\sum_{i=1}^{n}\langle X_i(\xi), \nu \rangle^2+\langle Y_i(\xi), \nu \rangle^2 }{dist(\xi,\partial \mathbb{H}^+)^2} |u|^2 d\xi,
\end{equation*}
where $X_i$ and $Y_i$ (for $i=1,\ldots,n$) are left-invariant vector fields on the Heisenberg group, $\nu$ is the Riemannian outer unit normal (see \cite{Garofalo}) to the boundary. Also, there is the $L^p$-generalisation of the above inequality
\begin{equation*}
\int_{\mathbb{H}^+} |\nabla_{H}u|^p d\xi \geq \left(\frac{p-1}{p}\right)^p \int_{\mathbb{H}^+}  \frac{\sum_{i=1}^{n}|\langle X_i(\xi), \nu \rangle|^p+|\langle Y_i(\xi), \nu \rangle|^p }{dist(\xi,\partial \mathbb{H}^+)^p} |u|^p d\xi.
\end{equation*} 
Note also that the authors in \cite{RSS_geom} have extended this result to general Carnot groups.

The main aim of this paper is to improve the $L^p$-version of the geometric Hardy inequality for the sub-Laplacian in the half-spaces on the stratified groups (the Carnot groups), where the obtained inequality will be a natural extension of the inequality derived by the authors in \cite{Larson} and \cite{RSS_geom} on the Heisenberg and the stratified groups, respectively. Note that it proves the inequality conjectured in \cite{Larson}.  Moreover, we obtain a version of the Hardy-Sobolev inequality in the setting of the Heisenberg group.  

The main results of this paper are as follows:
\begin{itemize}
	\item \textbf{Geometric $L^p$-Hardy inequality on $\G^+$:} 	Let $\G^+:= \{ x \in \G: \langle x, \nu \rangle > d  \}$ be a half-space of a stratified group $\G$. Then for all $u \in C^{\infty}_0(\G^+)$, $\beta \in \mathbb{R}$ and $p>1$ we have
	\begin{align*}
	\int_{\G^+} |\nabla_{\G}u|^p dx \geq& -(p-1)(|\beta|^{\frac{p}{p-1}}+\beta) \int_{\G^+} \frac{\mathcal{W}(x)^p}{dist(x,\partial \G^+)^p} |u|^p dx \\
	&+\beta\int_{\G^+} \frac{\La_p (dist(x,\partial \G^+))}{dist(x,\partial \G^+)^{p-1}} |u|^p dx,\nonumber
	\end{align*}
	where $\mathcal{W}(x):=(\sum_{i=1}^{N}\langle X_i(x),\nu\rangle^2)^{1/2}$, and $\mathcal{L}_p$ is the $p$-sub-Laplacian on $\G$, see \eqref{Lp}. 
	\item  \textbf{Geometric $L^p$-Hardy inequality on $\mathbb{H}^+$:} Let $\mathbb{H}^+:= \{\xi \in \mathbb{H}^n : \langle \xi , \nu \rangle >d \}$ be a half-space of the Heisenberg group. Then for all $u \in C^{\infty}_0(\mathbb{H}^+)$ and $p> 1$ we have
	\begin{equation*}
	\int_{\mathbb{H}^+} |\nabla_{H}u|^p d\xi \geq  \left(\frac{p-1}{p}\right)^p \int_{\mathbb{H}^+} \frac{\mathcal{W}(\xi)^p}{dist(\xi,\partial \mathbb{H}^+)^p} |u|^p d\xi,
	\end{equation*}
	where $\mathcal{W}(\xi):=\left( \sum_{i=1}^{n}\langle X_i(\xi),\nu\rangle^2+\langle Y_i(\xi),\nu\rangle^2\right)^{1/2}$ and the constant is sharp.
	\item \textbf{Geometric Hardy-Sobolev inequality on $\mathbb{H}^+$:}	Let $\mathbb{H}^+:= \{\xi \in \mathbb{H}^n:\langle \xi,\nu \rangle >d \}$ be a half-space of the Heisenberg group. Then for all $u \in C_0^{\infty}(\mathbb{H}^+)$ and $2\leq p<Q$ with $Q=2n+1$, there exists some $C>0$ such that we have 
	\begin{equation*}
	\left( \int_{\mathbb{H}^+} |\nabla_{H} u|^p d\xi - \left(\frac{p-1}{p}\right)^p \int_{\mathbb{H}^+} \frac{ \mathcal{W}(\xi)^p}{dist(\xi,\partial \mathbb{H}^+)^p} |u|^p d\xi \right)^{\frac{1}{p}} \geq C \left( \int_{\mathbb{H}^+} |u|^{p^*} d\xi\right)^{\frac{1}{p^*}},
	\end{equation*}
	where $dist(\xi,\partial \mathbb{H}^+) := \langle \xi, \nu \rangle - d$ is the distance from $\xi$ to the boundary and $p^* := Qp/(Q-p)$.
\end{itemize}

\subsection{Preliminaries on stratified groups}    
Let $\mathbb{G}=(\mathbb{R}^n,\circ,\delta_{\lambda})$ be a stratified Lie group (or a homogeneous Carnot group), with dilation structure $\delta_{\lambda}$ and Jacobian generators $X_{1},\ldots,X_{N}$, so that $N$ is the dimension of the first stratum of $\mathbb{G}$. Let us denote by $Q$ the homogeneous dimension of $\G$.  We refer to the recent books \cite{FR} and \cite{RS_book} for extensive discussions of stratified Lie groups and their properties.

The sub-Laplacian on $\mathbb{G}$ is given by
\begin{equation}\label{sublap}
\mathcal{L}=\sum_{k=1}^{N}X_{k}^{2}.
\end{equation}
We also recall that the standard Lebesque measure $dx$ on $\mathbb R^{n}$ is the Haar measure for $\mathbb{G}$ (see, e.g. \cite[Proposition 1.6.6]{FR}).
Each left invariant vector field $X_{k}$ has an explicit form and satisfies the divergence theorem,
see e.g. \cite{FR} for the derivation of the exact formula: more precisely, we can express
\begin{equation}\label{Xk0}
X_{k}=\frac{\partial}{\partial x'_{k}}+
\sum_{l=2}^{r}\sum_{m=1}^{N_{l}}a_{k,m}^{(l)}(x',...,x^{(l-1)})
\frac{\partial}{\partial x_{m}^{(l)}},
\end{equation}
with $x=(x',x^{(2)},\ldots,x^{(r)})$, where $r$ is the step of $\G$ and
$x^{(l)}=(x^{(l)}_1,\ldots,x^{(l)}_{N_l})$ are the variables in the $l^{th}$ stratum,
see also \cite[Section 3.1.5]{FR} for a general presentation.
The horizontal gradient is given by
$$\nabla_{\G}:=(X_{1},\ldots, X_{N}),$$
and the horizontal divergence is defined by
$${\rm div}_{\G} v:=\nabla_{\G}\cdot v.$$
The $p$-sub-Laplacian has the following form
\begin{equation}\label{Lp}
	\mathcal{L}_p v = \nabla_{\G} (|\nabla_{\G} v|^{p-2}\nabla_{\G} v).
\end{equation}
Let us define the half-space on the stratified group $\G$ as
\begin{equation*}
\G^+ := \{ x \in \G: \langle x, \nu \rangle > d  \},
\end{equation*}
where $\nu:=(\nu_1,\ldots,\nu_r)$ with $\nu_j \in \mathbb{R}^{N_j},\,\, j=1,\ldots,r,$ is the Riemannian outer unit normal to $\partial \G^+$ (see \cite{Garofalo}) and $d \in \mathbb{R}$. Let us define the so-called angle function  
\begin{equation*}
	\mathcal{W}(x) := \sqrt{ \sum_{i=1}^{N}\langle X_i(x),\nu\rangle^2};
\end{equation*} 
such function was introduced by Garofalo \cite{Garofalo} in his investigation of the horizontal Gauss map.
The Euclidean distance to the boundary $\partial \G^+$ is denoted by $dist(x,\partial \G^+)$ and defined as
\begin{equation*}
dist(x,\partial \G^+)= \langle x, \nu \rangle - d.
\end{equation*} 
\section{Geometric Hardy inequalities}
\begin{thm}\label{thm1}
	Let $\G^+$ be a half-space of a stratified group $\G$. Then for all $\beta \in \mathbb{R}$ and $p>1$ we have
	\begin{align}\label{Hardy_stratified}
	\int_{\G^+} |\nabla_{\G}u|^p dx \geq& -(p-1)(|\beta|^{\frac{p}{p-1}}+\beta) \int_{\G^+} \frac{\mathcal{W}(x)^p}{dist(x,\partial \G^+)^p} |u|^p dx \\
	&+\beta\int_{\G^+} \frac{\La_p(dist(x,\partial \G^+))}{dist(x,\partial \G^+)^{p-1}} |u|^p dx,\nonumber 
	\end{align}
	for all $u \in C^{\infty}_0(\G^+)$.
\end{thm}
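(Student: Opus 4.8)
The plan is to run the classical vector-field (or divergence-identity) method for $L^p$ Hardy inequalities, using the Euclidean distance to $\partial\G^+$ as the weight and exploiting the linear structure of $\langle x,\nu\rangle$. First I would set $\rho(x):=dist(x,\partial\G^+)=\langle x,\nu\rangle-d$, which is an affine — hence smooth — function on $\G$, strictly positive on $\G^+$. Since each generator $X_i$ of the form \eqref{Xk0} acts on the linear function $x\mapsto\langle x,\nu\rangle$ by $X_i\langle x,\nu\rangle=\langle X_i(x),\nu\rangle$, we obtain $\nabla_{\G}\rho=(\langle X_1(x),\nu\rangle,\dots,\langle X_N(x),\nu\rangle)$ and therefore $|\nabla_{\G}\rho(x)|=\mathcal{W}(x)$. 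This identity is what makes the angle function $\mathcal{W}$ appear in the final inequality.

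Next I would test against the vector field
$$V:=\beta\,\frac{|\nabla_{\G}\rho|^{p-2}\nabla_{\G}\rho}{\rho^{p-1}},$$
and compute its horizontal divergence by the Leibniz rule, using $\nabla_{\G}\rho\cdot\nabla_{\G}\rho=\mathcal{W}^2$ and ${\rm div}_{\G}(|\nabla_{\G}\rho|^{p-2}\nabla_{\G}\rho)=\La_p\rho$ from \eqref{Lp}:
$${\rm div}_{\G}V=\beta\,\frac{\La_p\rho}{\rho^{p-1}}-\beta(p-1)\,\frac{\mathcal{W}^p}{\rho^p}.$$
Multiplying by $|u|^p$, integrating over $\G^+$, and integrating by parts — the boundary terms vanish because $u\in C^{\infty}_0(\G^+)$ is supported away from $\partial\G^+$, so $\rho$ is bounded below on $\mathrm{supp}\,u$ — gives
$$\int_{\G^+}({\rm div}_{\G}V)|u|^p\,dx=-p\int_{\G^+}|u|^{p-2}u\,V\cdot\nabla_{\G}u\,dx,$$
and hence, by the Cauchy--Schwarz inequality for the horizontal inner product together with $|V|=|\beta|\,\mathcal{W}^{p-1}/\rho^{p-1}$,
$$\int_{\G^+}({\rm div}_{\G}V)|u|^p\,dx\leq p\int_{\G^+}|\beta|\,\frac{\mathcal{W}^{p-1}}{\rho^{p-1}}\,|u|^{p-1}\,|\nabla_{\G}u|\,dx.$$

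Then I would apply Young's inequality in the sharp form $p\,ab\leq a^p+(p-1)b^{p/(p-1)}$ with $a=|\nabla_{\G}u|$ and $b=|\beta|\,\mathcal{W}^{p-1}\rho^{1-p}|u|^{p-1}$, which bounds the last right-hand side by $\int_{\G^+}|\nabla_{\G}u|^p\,dx+(p-1)|\beta|^{p/(p-1)}\int_{\G^+}(\mathcal{W}^p/\rho^p)|u|^p\,dx$. Equating this with the exact expression $\beta\int_{\G^+}(\La_p\rho/\rho^{p-1})|u|^p\,dx-\beta(p-1)\int_{\G^+}(\mathcal{W}^p/\rho^p)|u|^p\,dx$ for $\int_{\G^+}({\rm div}_{\G}V)|u|^p\,dx$ and rearranging to isolate $\int_{\G^+}|\nabla_{\G}u|^p\,dx$, the two occurrences of $\mathcal{W}^p/\rho^p$ combine with coefficient $-(p-1)(|\beta|^{p/(p-1)}+\beta)$ while the term $\La_p\rho/\rho^{p-1}$ survives with coefficient $\beta$, which is exactly \eqref{Hardy_stratified}.

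The divergence computation and the use of Young's inequality are routine; the genuinely delicate point — and the one I expect to be the main obstacle to a fully rigorous argument — is the regularity of $|\nabla_{\G}\rho|^{p-2}\nabla_{\G}\rho$ and of $\La_p\rho$ on the characteristic set $\{\mathcal{W}=0\}$, where $\mathcal{W}$, being the square root of a sum of squares of polynomials, need not be smooth. For $p\geq 2$ this causes no essential trouble, so one could either restrict to that range in a first pass, or handle the general case by a standard regularization — replacing $|\nabla_{\G}\rho|$ by $(\mathcal{W}^2+\varepsilon)^{1/2}$, carrying out the argument, and letting $\varepsilon\to 0$ — or by performing the integration by parts on $\G^+\setminus\{\mathcal{W}=0\}$ and checking that a shrinking neighbourhood of the excised set contributes nothing.
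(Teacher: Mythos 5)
Your proposal is correct and follows essentially the same route as the paper: the same choice of vector field $V=\beta\,|\nabla_{\G}\rho|^{p-2}\nabla_{\G}\rho/\rho^{p-1}$ with $\rho=dist(x,\partial\G^+)$, the identity $|\nabla_{\G}\rho|=\mathcal{W}$ via $X_i\langle x,\nu\rangle=\langle X_i(x),\nu\rangle$, the same divergence computation, and the same H\"older/Cauchy--Schwarz plus Young step leading to \eqref{Hardy_stratified}. Your closing remark on the regularity of $|\nabla_{\G}\rho|^{p-2}\nabla_{\G}\rho$ near $\{\mathcal{W}=0\}$ for $1<p<2$ is a point the paper passes over silently, and your proposed regularization is a reasonable way to handle it.
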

\begin{proof}[Proof of Theorem \ref{thm1}]
	Let us begin with the divergence theorem, then we apply the H\"older inequality and the Young inequality, respectively. It follows for a vector field $V \in C^{\infty}(\G^+) $ that
\begin{align*}
		\int_{\G^+} { \rm div}_{\G}V  |u|^p dx &= -p \int_{\G^+}|u|^{p-1}  \langle V, \nabla_{\G} u \rangle dx \\
		& \leq p \left( \int_{\G^+} |\nabla_{\G} u|^p dx\right)^{\frac{1}{p}} \left( \int_{\G^+} |V|^{\frac{p}{p-1}}|u|^p dx \right)^{\frac{p-1}{p}}\\
		& \leq \int_{\G^+} |\nabla_{\G} u|^p dx + (p-1)\int_{\G^+} |V|^{\frac{p}{p-1}}|u|^p dx.	\end{align*}
		By rearranging the above expression, we arrive at
	\begin{equation}\label{1}
		\int_{\G^+} |\nabla_{\G} u|^p dx \geq \int_{\G^+} ({\rm div}_{\G}V-(p-1)|V|^{\frac{p}{p-1}} )|u|^p dx.
	\end{equation}
	Now we choose $V$ in the following form
	\begin{equation}\label{2}
	V = \beta \frac{|\nabla_{\G} dist(x,\partial \G^+)|^{p-2}}{dist(x,\partial \G^+)^{p-1}}\nabla_{\G} dist(x,\partial \G^+),
	\end{equation}
that is
	\begin{equation*}
	|V|^{\frac{p}{p-1}} =|\beta|^{\frac{p}{p-1}} \frac{|\nabla_{\G} dist(x,\partial \G^+)|^p}{dist(x,\partial \G^+)^p}.
	\end{equation*}
	Also, we have
	\begin{align*}
	|\nabla_{\G} dist(x,\partial \G^+)|^p &= \left|\left( X_1dist(x,\partial \G^+),\ldots,X_N dist(x,\partial \G^+) \right)\right|^p\\
	& =|\left( \langle X_1(x),\nu\rangle,\ldots,\langle X_N(x),\nu\rangle  \right)|^p\\
	& =\left( \sum_{i=1}^{N}\langle X_i(x),\nu \rangle^2 \right)^{\frac{p}{2}}= \mathcal{W}(x)^p.
	\end{align*}
	Indeed, let us show that $\langle X_i(x),\nu \rangle = X_i\langle x,\nu \rangle$:
	\begin{multline*}
	X_i(x) = (\overset{i}{\overbrace{0,\ldots,1},\ldots,0}, \underset{N_2}{\underbrace{a_{i,1}^{(2)}(x'),\ldots,a_{i,N_2}^{(2)}(x')}},\ldots, \\ \underset{N_r}{\underbrace{a_{i,1}^{(r)}(x',x^{(2)},\ldots,x^{(r-1)}),\ldots a_{i,N_r}^{(r)}(x',x^{(2)},\ldots,x^{(r-1)})}}),
	\end{multline*}
	\begin{equation*}
		\langle X_i(x), \nu \rangle = \nu'_i + \sum_{l=2}^{r}\sum_{m=1}^{N_l}a_{i,m}^{(l)}(x',x^{(2)},\ldots,x^{(r-1)})\nu^{(l)}_m,
	\end{equation*}
 and 
	\begin{align*}
		&\langle x,\nu \rangle = \sum_{k=1}^{N}x_k'\nu'_k + \sum_{l=2}^{r}\sum_{m=1}^{N_l}x_m^{(l)}\nu_m^{(l)},\\
		&X_i\langle x,\nu \rangle= \nu'_i + \sum_{l=2}^{r}\sum_{m=1}^{N_l}a_{i,m}^{(l)}(x',x^{(2)},\ldots,x^{(r-1)})\nu^{(l)}_m.
	\end{align*}
A direct calculation shows that
	\begin{align*}
		 { \rm div}_{\G} V =& \beta \frac{\nabla_{\G}(|\nabla_{\G}dist(x,\partial \G^+)|^{p-2}\nabla_{\G}dist(x,\partial \G^+))}{dist(x,\partial \G^+)^{p-1}}\\
		 & - \beta(p-1) \frac{|\nabla_{\G}dist(x,\partial \G^+)|^{p-2}\nabla_{\G}dist(x,\partial \G^+)dist(x,\partial \G^+)^{p-2}\nabla_{\G}dist(x,\partial \G^+)}{dist(x,\partial \G^+)^{2(p-1)}}\\
		 =&\beta \frac{\La_p (dist(x,\partial \G^+))}{dist(x,\partial \G^+)^{p-1}} - \beta(p-1)\frac{|\nabla_{\G} dist(x,\partial \G^+)|^p}{dist(x,\partial \G^+)^p}.
	\end{align*}
	So we get 
\begin{align*}
	{\rm div}_{\G}V-(p-1)|V|^{\frac{p}{p-1}}  =& - (p-1)(|\beta|^{\frac{p}{p-1}}+\beta)\frac{|\nabla_{\G} dist(x,\partial \G^+)|^p}{dist(x,\partial \G^+)^p}\\
	& + \beta \frac{\La_p(dist(x,\partial \G^+))}{dist(x,\partial \G^+)^{p-1}}.
\end{align*}	
	Putting the above expression into inequality \eqref{1},  we arrive at 
	 \begin{align*}
	 \int_{\G^+} |\nabla_{\G} u|^pdx &\geq -(p-1)(|\beta|^{\frac{p}{p-1}}+\beta) \int_{\G^+} \frac{(\sum_{i=1}^{N}\langle X_i(x),\nu\rangle^2)^{\frac{p}{2}}}{dist(x,\partial \G^+)^p} |u|^p dx \\
	 &+\beta\int_{\G^+} \frac{\La_p(dist(x,\partial \G^+))}{dist(x,\partial \G^+)^{p-1}} |u|^p dx,
	 \end{align*}
	 completing the proof. 
\end{proof}	
\subsection{Preliminaries on the Heisenberg group} Let us give a brief introduction of the Heisenberg group.
Let $\mathbb{H}^n$ be the Heisenberg group, that is, the set $\mathbb{R}^{2n+1}$ equipped with the group law 
\begin{equation*}
\xi \circ \widetilde{\xi} := (x + \widetilde{x}, y + \widetilde{y}, t + \widetilde{t}+2 \sum_{i=1}^{n}(\widetilde{x}_i y_i - x_i \widetilde{y}_i )),
\end{equation*}
where $\xi:= (x,y,t) \in \mathbb{H}^n$, $x:=(x_1,\ldots,x_n)$, $y:=(y_1,\ldots,y_n)$, and $\xi^{-1}=-\xi$ is the inverse element of $\xi$ with respect to the group law. The dilation operation of the Heisenberg group with respect to the group law has the following form
\begin{equation*}
\delta_{\lambda}(\xi) := (\lambda x, \lambda y, \lambda^2 t) \,\, \text{for}\,\, \lambda>0.
\end{equation*} 
The Lie algebra $\mathfrak{h}$ of the left-invariant vector fields on the Heisenberg group $\mathbb{H}^n$ is spanned by 
\begin{equation*}
X_i:= \frac{\partial }{\partial x_i} + 2y_i\frac{\partial }{\partial t} \,\, \text{for} \,\, 1\leq i \leq n,
\end{equation*}
\begin{equation*}
Y_i:= \frac{\partial }{\partial y_i} - 2x_i\frac{\partial }{\partial t} \,\, \text{for} \,\, 1\leq i \leq n,
\end{equation*}
and  with their (non-zero) commutator
\begin{equation*}
[X_i,Y_i]= - 4 \frac{\partial}{\partial t}.
\end{equation*} 
The horizontal gradient of $\mathbb{H}^n$ is given by 
\begin{equation*}
\nabla_{H}:= (X_1,\ldots,X_n,Y_1,\ldots,Y_n),
\end{equation*} 
so the sub-Laplacian on $\mathbb{H}^n$ is given by
\begin{equation*}
\mathcal{L}:= \sum_{i=1}^{n} \left(X_i^2 + Y_i^2\right). 
\end{equation*} 

Let us define the half-space of the Heisenberg group by
\begin{equation*}
\mathbb{H}^+:= \{\xi \in \mathbb{H}^n : \langle \xi , \nu \rangle >d \},
\end{equation*}
where $\nu:= (\nu_x,\nu_y,\nu_t)$ with $\nu_x,\nu_y \in \mathbb{R}^n$ and $\nu_t \in \mathbb{R}$ is the Riemannian outer unit normal to $\partial \mathbb{H}^+$ (see \cite{Garofalo}) and $d \in \mathbb{R}$. Let us define the so-called angle function  
\begin{equation*}
\mathcal{W}(\xi) := \sqrt{ \sum_{i=1}^{n}\langle X_i(\xi),\nu\rangle^2+\langle Y_i(\xi),\nu\rangle^2}.
\end{equation*}  The Euclidean distance to the boundary $\partial \mathbb{H}^+$ is denoted by $dist(\xi,\partial \mathbb{H}^+)$ and defined by
\begin{equation*}
dist(\xi,\partial \mathbb{H}^+) := \langle \xi, \nu \rangle - d.
\end{equation*}
\subsection{Consequences on the Heisenberg group}
As a consequence of Theorem \ref{thm1} we have the following inequality.
\begin{cor}\label{cor12}
	Let $\mathbb{H}^+$ be a half-space of the Heisenberg group $\mathbb{H}^n$. Then for all $u \in C^{\infty}_0(\mathbb{H}^+)$ and $p> 1$ we have
	\begin{equation}\label{Hardy1}
	\int_{\mathbb{H}^+} |\nabla_{H}u|^p d\xi \geq  \left(\frac{p-1}{p}\right)^p \int_{\mathbb{H}^+} \frac{\mathcal{W}(\xi)^p}{dist(\xi,\partial \mathbb{H}^+)^p} |u|^p d\xi,
	\end{equation}
	where the constant is sharp.
\end{cor}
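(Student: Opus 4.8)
The plan is to obtain \eqref{Hardy1} directly from Theorem \ref{thm1} applied to $\G=\mathbb{H}^n$ — whose first stratum is spanned by $X_1,\dots,X_n,Y_1,\dots,Y_n$, so that the ``$\mathcal{W}$'' of Theorem \ref{thm1} is exactly the Heisenberg angle function $\mathcal{W}(\xi)$ — and then to optimize over the free parameter $\beta$. The point that makes the extra term in \eqref{Hardy_stratified} disappear is the identity $\La_p(\delta)\equiv 0$ on $\mathbb{H}^n$, where $\delta=\delta(\xi):=dist(\xi,\partial\mathbb{H}^+)=\langle\xi,\nu\rangle-d$. To see this, note $X_i\delta=\langle X_i(\xi),\nu\rangle=\nu_{x,i}+2y_i\nu_t$ and $Y_i\delta=\nu_{y,i}-2x_i\nu_t$ are affine functions annihilated by a second application of $X_i$, resp.\ $Y_i$, so $\mathcal{L}\delta=0$; moreover $|\nabla_H\delta|^2=\mathcal{W}(\xi)^2$ is a function of $z=(x,y)$ only, and a short computation gives $X_j(\mathcal{W}^2)=-4\nu_t\,Y_j\delta$ and $Y_j(\mathcal{W}^2)=4\nu_t\,X_j\delta$, whence $\langle\nabla_H(\mathcal{W}^2),\nabla_H\delta\rangle=0$. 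Since $\La_p\delta=|\nabla_H\delta|^{p-2}\mathcal{L}\delta+\tfrac{p-2}{2}|\nabla_H\delta|^{p-4}\langle\nabla_H(\mathcal{W}^2),\nabla_H\delta\rangle$, both terms vanish.

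Consequently Theorem \ref{thm1} reduces to $\int_{\mathbb{H}^+}|\nabla_H u|^p\,d\xi\ge-(p-1)\big(|\beta|^{p/(p-1)}+\beta\big)\int_{\mathbb{H}^+}\mathcal{W}^p\delta^{-p}|u|^p\,d\xi$ for every $\beta\in\mathbb{R}$, and it remains to maximize $\beta\mapsto-(p-1)\big(|\beta|^{p/(p-1)}+\beta\big)$. A one-variable calculation shows the maximum is attained at $\beta=-\big(\tfrac{p-1}{p}\big)^{p-1}<0$ and equals $\big(\tfrac{p-1}{p}\big)^{p}$, which is precisely the constant in \eqref{Hardy1}.

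For sharpness it is enough to show the constant cannot be improved for one half-space, say $\mathbb{H}^+=\{\xi\in\mathbb{H}^n:t>0\}$, for which $\delta(\xi)=t$ and $\mathcal{W}(\xi)=2|z|$. I would test \eqref{Hardy1} on product functions $u(\xi)=\chi(|z|)\,v(t)$, with $\chi\in C_0^\infty((0,\infty))$ a fixed radial profile supported away from $z=0$ and $v=v_\varepsilon\in C_0^\infty((0,\infty))$ a family of near-optimizers for the one-dimensional Hardy inequality $\int_0^\infty|v'|^p\,dt\ge\big(\tfrac{p-1}{p}\big)^p\int_0^\infty t^{-p}|v|^p\,dt$ — concretely $v_\varepsilon(t)\approx t^{(p-1)/p}$ on $[\varepsilon,1]$, smoothly cut off outside, so that $\int|v_\varepsilon'|^p\sim\big(\tfrac{p-1}{p}\big)^p\log(1/\varepsilon)$ and $\int t^{-p}|v_\varepsilon|^p\sim\log(1/\varepsilon)$, while $\int|v_\varepsilon|^p$ and $\int v_\varepsilon^2|v_\varepsilon'|^{p-2}$ stay bounded as $\varepsilon\to0$. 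Because $\chi$ is radial in $z$ the cross term drops out and $|\nabla_H u|^2=\chi'(|z|)^2v(t)^2+4|z|^2\chi(|z|)^2v'(t)^2$; expanding $(a+b)^{p/2}$ by subadditivity when $1<p\le2$ and by a convexity bound $(a+b)^{p/2}\le b^{p/2}+C_p(a^{p/2}+a\,b^{p/2-1})$ when $p\ge2$, the leading part of $\int_{\mathbb{H}^+}|\nabla_H u|^p\,d\xi$ is $2^p\big(\int|z|^p\chi^p\,dz\big)\int|v_\varepsilon'|^p$ with an $O(1)$ remainder, whereas $\int_{\mathbb{H}^+}\mathcal{W}^p\delta^{-p}|u|^p\,d\xi=2^p\big(\int|z|^p\chi^p\,dz\big)\int t^{-p}|v_\varepsilon|^p$ exactly. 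Dividing and letting $\varepsilon\to0$, the quotient tends to $\big(\tfrac{p-1}{p}\big)^p$, which proves optimality.

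The pointwise identities for $\delta$ and the optimization in $\beta$ are routine; the only delicate step is the sharpness argument, where I expect the main obstacle to be verifying that the cross term genuinely vanishes for radial $\chi$ and that the remainder terms produced by the $(a+b)^{p/2}$ expansion really stay bounded — hence negligible against the $\log(1/\varepsilon)$ divergence of the leading terms — uniformly in $\varepsilon$.
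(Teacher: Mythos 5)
Your proof of the inequality itself is correct and follows the paper's route exactly: specialize Theorem \ref{thm1} to $\mathbb{H}^n$, kill the second term by showing $\La_p(dist(\xi,\partial\mathbb{H}^+))=0$, and optimize $\beta\mapsto -(p-1)(|\beta|^{p/(p-1)}+\beta)$ at $\beta=-\left(\frac{p-1}{p}\right)^{p-1}$, which yields the constant $\left(\frac{p-1}{p}\right)^p$. Your justification of $\La_p\,\delta=0$ is in fact slightly more explicit than the paper's: you split $\La_p\delta=|\nabla_H\delta|^{p-2}\mathcal{L}\delta+\frac{p-2}{2}|\nabla_H\delta|^{p-4}\langle\nabla_H(\mathcal{W}^2),\nabla_H\delta\rangle$ and verify both the harmonicity $\mathcal{L}\delta=0$ and the orthogonality $X_j(\mathcal{W}^2)=-4\nu_t Y_j\delta$, $Y_j(\mathcal{W}^2)=4\nu_t X_j\delta$ (the paper only lists the first-order identities here and records the orthogonality later, inside the proof of Theorem \ref{thm}); both computations check out. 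Where you genuinely diverge is the sharpness claim: the paper does not prove it but refers to \cite{Larson}, where optimality is shown for the particular half-space with $\nu=(1,0,\ldots,0)$, $d=0$, whereas you give a self-contained test-function construction on the half-space $\{t>0\}$, where $\delta=t$ and $\mathcal{W}=2|z|$. Your construction is sound: for $u=\chi(|z|)v_\varepsilon(t)$ the cross term in $|\nabla_H u|^2$ cancels by radial symmetry, the one-dimensional quasi-extremals $v_\varepsilon\sim t^{(p-1)/p}$ make both leading integrals grow like $\log(1/\varepsilon)$ with the correct ratio, and the remainder terms ($\int|v_\varepsilon|^p$ and $\int v_\varepsilon^2|v_\varepsilon'|^{p-2}$, which behave like $\int t^{p-1}$ and $\int t$) indeed stay bounded, so the quotient tends to $\left(\frac{p-1}{p}\right)^p$. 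This buys a sharpness proof independent of \cite{Larson} (and for a different, non-equivalent half-space), at the cost of some standard but unwritten details: smoothing the piecewise-defined $v_\varepsilon$ so that $u\in C_0^\infty(\mathbb{H}^+)$, and the elementary convexity inequality used for $p\geq 2$; note also that, like the paper's remark, your argument establishes optimality for one half-space rather than for every choice of $\nu$ and $d$.
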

\begin{rem}
	Note that inequality \eqref{Hardy1} was conjectured in \cite{Larson} which is a natural extension of inequality \eqref{LY_ineq} in \cite{LY}. Also, the sharpness of inequality \eqref{Hardy1} was proved by choosing $\nu:=(1,0,\ldots,0)$ and $d=0$ in the paper \cite{Larson}.
\end{rem}

\begin{proof}[Proof of Corollary \ref{cor12}]
Let us rewrite the inequality in Theorem \ref{thm1} in terms of the Heisenberg group as follows
	\begin{align*}
\int_{\mathbb{H}^+} |\nabla_{H} u|^p d\xi \geq& -(p-1)(|\beta|^{\frac{p}{p-1}}+\beta) \int_{\mathbb{H}^+} \frac{\mathcal{W}(\xi)^p}{dist(\xi,\partial \mathbb{H}^+)^p} |u|^p d\xi \\
&+\beta\int_{\mathbb{H}^+} \frac{\La_p(dist(\xi,\partial\mathbb{H}^+))}{dist(\xi,\partial\mathbb{H}^+)^{p-1}}|u|^p d\xi.
\end{align*}
In the case of the Heisenberg group, we need to show that the last term vanishes to prove Corollary \ref{cor12}. Indeed, we have
\begin{equation*}
\La_p(dist(\xi,\partial\mathbb{H}^+))=0,
\end{equation*} 
since 
\begin{align*}
	&\langle X_i(\xi),\nu\rangle= \nu_{x,i}+2y_i\nu_t, & &\langle Y_i(\xi),\nu\rangle= \nu_{y,i}-2x_i\nu_t, \\
	& X_i\langle X_i(\xi),\nu\rangle=0,& & Y_i\langle Y_i(\xi),\nu\rangle=0,\\
	& Y_i\langle X_i(\xi),\nu\rangle=2 \nu_t, && X_i\langle Y_i(\xi),\nu\rangle=-2 \nu_t,
\end{align*}
where $\xi:=(x,y,t)$ with $x,y \in \mathbb{R}^n$ and $t \in \mathbb{R}$, $\nu := (\nu_x,\nu_y,\nu_t)$ with $\nu_x:=(\nu_{x,1},\ldots \nu_{x,n})$ and $\nu_y:=(\nu_{y,1},\ldots \nu_{y,n})$. Then we have 
	\begin{align*}
&X_i(\xi) = (\underset{n}{\underbrace{\overset{i}{\overbrace{0,\ldots,1},\ldots,0}}},\underset{n}{\underbrace{0,\ldots,0}}, 2y_i),\\
&Y_i(\xi) = (\underset{n}{\underbrace{0,\ldots,0}},\underset{n}{\underbrace{\overset{i}{\overbrace{0,\ldots,1},\ldots,0}}},- 2x_i).
\end{align*}
So we have
	\begin{equation*}
		\int_{\mathbb{H}^+} |\nabla_{H} u|^p d\xi \geq -(p-1)(|\beta|^{\frac{p}{p-1}}+\beta) \int_{\mathbb{H}^+} \frac{\mathcal{W}(\xi)^p}{dist(\xi,\partial \mathbb{H}^+)^p} |u|^p d\xi.
	\end{equation*}
Now we optimise by differentiating the above inequality with respect to $\beta$, so that we have
	\begin{equation*}
		\frac{p}{p-1}|\beta|^{\frac{1}{p-1}}+1=0,
	\end{equation*}
	which leads to 
	\begin{equation*}
		\beta = -  \left(\frac{p-1}{p}\right)^{p-1}.
	\end{equation*}
	Using this value of $\beta$, we arrive at 
		\begin{equation*}
	\int_{\mathbb{H}^+} |\nabla_{H} u|^p d\xi \geq \left(\frac{p-1}{p}\right)^{p} \int_{\mathbb{H}^+} \frac{\mathcal{W}(\xi)^p}{dist(\xi,\partial \mathbb{H}^+)^p} |u|^p d\xi.
	\end{equation*}
	We have finished the proof of Corollary \ref{cor12}.
\end{proof}
\section{Geometric Hardy-Sobolev inequalities}
In this section, we present the geometric Hardy-Sobolev inequality in the half space on the Heisenberg group.

\subsection{A lower estimate for the geometric Hardy type inequalities}
We start with an estimate for the remainder.
\begin{lem}\label{lem1}
	Let $\mathbb{H}^+$ be a half-space of the Heisenberg group $\mathbb{H}^n$. Then for $p\geq 2$, there exists a constant $C_p > 0$ such that
	\begin{align}
	E_p[u]=& \int_{\mathbb{H}^+} |\nabla_{H}u|^p d\xi - \left(\frac{p-1}{p}\right)^p \int_{\mathbb{H}^+} \frac{ |\nabla_{H}dist(\xi,\partial \mathbb{H}^+)|^p}{dist(\xi,\partial \mathbb{H}^+)^p} |u|^p d\xi \nonumber\\
	\geq& C_p  \int_{\mathbb{H}^+} |dist(\xi,\partial \mathbb{H}^+)|^{p-1}|\nabla_{H} v|^p d\xi,
	\end{align}
	for all $u \in C_0^{\infty}(\mathbb{H}^+)$, where $dist(\xi,\partial \mathbb{H}^+) := \langle \xi, \nu \rangle - d$ is the distance from $\xi$ to the boundary, $C_p=(2^{p-1}-1)^{-1}$, and $u(\xi)=dist(\xi, \partial \mathbb{H}^+)^{\frac{p-1}{p}}v(\xi)$.
\end{lem}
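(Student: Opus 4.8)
The natural route is a ground-state substitution. Abbreviating $d(\xi):=dist(\xi,\partial\mathbb{H}^+)=\langle\xi,\nu\rangle-d$, I would set $u(\xi)=d(\xi)^{\frac{p-1}{p}}v(\xi)$, i.e.\ factor out the function that formally saturates \eqref{Hardy1}. Since $d$ is affine and strictly positive on the compact set $\mathrm{supp}\,u$, the function $v=d^{-\frac{p-1}{p}}u$ again lies in $C_0^\infty(\mathbb{H}^+)$. Differentiating, and recalling from the proof of Corollary \ref{cor12} that $|\nabla_H d|=\mathcal{W}$ and $\La_p(d)=0$ on $\mathbb{H}^n$, one gets $\nabla_H u=A+B$ with
\begin{equation*}
A:=\frac{p-1}{p}\,d^{-\frac1p}v\,\nabla_H d,\qquad B:=d^{\frac{p-1}{p}}\nabla_H v .
\end{equation*}
Because $|u|^p=d^{p-1}|v|^p$, a direct check gives $|A|^p=\bigl(\tfrac{p-1}{p}\bigr)^p d^{-p}|\nabla_H d|^p|u|^p$, so that the remainder in the lemma collapses to
\begin{equation*}
E_p[u]=\int_{\mathbb{H}^+}\bigl(|A+B|^p-|A|^p\bigr)\,d\xi .
\end{equation*}

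The second ingredient is the elementary convexity (Bregman-type) inequality, valid for $p\geq 2$ and all $X,Y\in\mathbb{R}^{2n}$,
\begin{equation*}
|X+Y|^p\geq|X|^p+p|X|^{p-2}\langle X,Y\rangle+\frac{1}{2^{p-1}-1}|Y|^p ,
\end{equation*}
a variant of the pointwise estimates used in \cite{FMT,FL}. Applying it pointwise with $X=A$, $Y=B$ and using $|B|^p=d^{p-1}|\nabla_H v|^p$, I obtain
\begin{equation*}
E_p[u]\geq p\int_{\mathbb{H}^+}|A|^{p-2}\langle A,B\rangle\,d\xi+\frac{1}{2^{p-1}-1}\int_{\mathbb{H}^+}d^{p-1}|\nabla_H v|^p\,d\xi .
\end{equation*}
Since the second term on the right is exactly the claimed lower bound, it remains only to show that the cross term is nonnegative; in fact I claim it vanishes.

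The decisive point is that the powers of $d$ in $|A|^{p-2}\langle A,B\rangle$ cancel — the exponent of $d$ is $-\tfrac{p-2}{p}+\tfrac{p-2}{p}=0$ — so, collecting constants and using $\nabla_H(|v|^p)=p|v|^{p-2}v\,\nabla_H v$,
\begin{equation*}
p\,|A|^{p-2}\langle A,B\rangle=\Bigl(\frac{p-1}{p}\Bigr)^{p-1}|\nabla_H d|^{p-2}\,\bigl\langle\nabla_H d,\nabla_H(|v|^p)\bigr\rangle .
\end{equation*}
Since $v\in C_0^\infty(\mathbb{H}^+)$, the divergence theorem on $\mathbb{H}^n$ then gives
\begin{align*}
p\int_{\mathbb{H}^+}|A|^{p-2}\langle A,B\rangle\,d\xi
&=-\Bigl(\frac{p-1}{p}\Bigr)^{p-1}\int_{\mathbb{H}^+}{\rm div}_H\bigl(|\nabla_H d|^{p-2}\nabla_H d\bigr)|v|^p\,d\xi\\
&=-\Bigl(\frac{p-1}{p}\Bigr)^{p-1}\int_{\mathbb{H}^+}\La_p(d)\,|v|^p\,d\xi=0,
\end{align*}
because $\La_p(dist(\xi,\partial\mathbb{H}^+))=0$ on the Heisenberg group (proof of Corollary \ref{cor12}). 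Combining the last displays proves the lemma with $C_p=(2^{p-1}-1)^{-1}$; note that for $p=2$ the convexity inequality is an identity, so in that case $E_2[u]=\int_{\mathbb{H}^+}d\,|\nabla_H v|^2\,d\xi$ exactly.

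The main obstacle — and the reason the statement requires $p\geq2$ — is the pointwise convexity inequality: for $1<p<2$ it fails in this form (the correct lower bound then contains a term of type $|X|^{p-2}|Y|^2$ rather than $|Y|^p$), so the argument above does not extend and a genuinely different remainder estimate would be needed. Everything else is bookkeeping: verifying the cancellation of the $d$-exponents in the cross term, and checking that $v=d^{-(p-1)/p}u$ is an admissible test function for the integration by parts, both immediate since $d$ is smooth and bounded away from zero on $\mathrm{supp}\,u$.
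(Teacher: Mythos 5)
Your argument is correct and essentially the paper's own proof: the same ground-state substitution $u=dist(\xi,\partial\mathbb{H}^+)^{\frac{p-1}{p}}v$, the same decomposition $\nabla_H u=A+B$, and the same pointwise convexity inequality $|A+B|^p-|A|^p\geq C_p|B|^p+p|A|^{p-2}A\cdot B$ with $C_p=(2^{p-1}-1)^{-1}$ from Barbatis--Filippas--Tertikas. The only (welcome) difference is that you justify discarding the cross term by showing it is exactly zero via integration by parts and $\mathcal{L}_p(dist(\xi,\partial\mathbb{H}^+))=0$, and you work with the exact identity $E_p[u]=\int_{\mathbb{H}^+}(|A+B|^p-|A|^p)\,d\xi$, whereas the paper drops that term without comment and passes through an informal step involving division by the vector $\nabla_H dist(\xi,\partial\mathbb{H}^+)$; your version is the cleaner rendering of the same argument.
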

The Euclidean version of such a lower estimate to the Hardy inequality was established by Barbaris, Filippas and Tertikas \cite{BFT}.
\begin{proof}[Proof of Lemma \ref{lem1}]
	Let us begin by recalling once again the angle function, denoted by $\mathcal{W}$,
	\begin{align}\label{est}
	|\nabla_{H}dist(\xi,\partial \mathbb{H}^+)|^p &= |(X_1\langle \xi, \nu\rangle,\ldots,X_n\langle \xi,  \nu\rangle, Y_1\langle \xi, \nu\rangle,\ldots, Y_n\langle \xi, \nu\rangle)|^p \nonumber\\
	& = \left( \sum_{i=1}^{n} \langle X_i(\xi), \nu\rangle^2 + \langle Y_i(\xi), \nu\rangle^2 \right)^{\frac{p}{2}} \nonumber\\
	&= \mathcal{W}(\xi)^p.
	\end{align}
	Note that $X_i\langle \xi,\nu \rangle$ is equal to $\langle X_i(\xi),\nu \rangle$, see the proof of Theorem \ref{thm1}. This expression $|\nabla_{H}dist(\xi,\partial \mathbb{H}^+)|^p =  \mathcal{W}(\xi)^p$
	will be used later. For now we will estimate the following form
	\begin{align}\label{est1}
	E_p[u]:= \int_{\mathbb{H}^+} |\nabla_{H}u|^p d\xi - \left(\frac{p-1}{p}\right)^p \int_{\mathbb{H}^+} \frac{ \mathcal{W}(\xi)^p}{dist(\xi,\partial \mathbb{H}^+)^p} |u|^p d\xi.
	\end{align}
	To estimate this, we introduce the following ground transform 
	\begin{equation}
	u(\xi) = dist(\xi,\partial \mathbb{H}^+)^{\frac{p-1}{p}}v(\xi).
	\end{equation}
	By inserting it into \eqref{est1} and using \eqref{est}, we have
	\begin{align*}
	E_p[u]=& \int_{\mathbb{H}^+} \left| \frac{p-1}{p} dist(\xi,\partial \mathbb{H}^+)^{-\frac{1}{p}} \nabla_{H}dist(\xi,\partial \mathbb{H}^+) v + dist(\xi,\partial \mathbb{H}^+)^{\frac{p-1}{p}}\nabla_{H} v \right|^p d\xi \\
	& - \left(\frac{p-1}{p}\right)^p \int_{\mathbb{H}^+}\frac{ \mathcal{W}(\xi)^p}{dist(\xi,\partial \mathbb{H}^+)^p}|dist(\xi,\partial \mathbb{H}^+)^{\frac{p-1}{p}}v|^p d\xi \\
	\geq& \int_{\mathbb{H}^+} \left| \frac{p-1}{p} dist(\xi,\partial \mathbb{H}^+)^{-\frac{1}{p}}  v + dist(\xi,\partial \mathbb{H}^+)^{\frac{p-1}{p}} \frac{\nabla_{H} v}{\nabla_{H}dist(\xi,\partial \mathbb{H}^+)} \right|^p|\mathcal{W}(\xi)|^p \\
	&- \left| \frac{p-1}{p} dist(\xi,\partial \mathbb{H}^+)^{-\frac{1}{p}}  v \right|^p |\mathcal{W}(\xi)|^p  d\xi.  
	\end{align*}
	Then for $p\geq 2$ and $A,B \in \mathbb{R}^n$ we have that
	\begin{equation*}
	|A + B|^p - |A|^p \geq C_p |B|^p + p|A|^{p-2}A\cdot B, 
	\end{equation*} 
	where $C_p = (2^{p-1}-1)^{-1}$ (see \cite[Lemma 3.3]{BFT}). By taking 
	\begin{equation*}
		A := \frac{p-1}{p} dist(\xi,\partial \mathbb{H}^+)^{-\frac{1}{p}}  v \,\,\, \text{and} \,\,\, B:=dist(\xi,\partial \mathbb{H}^+)^{\frac{p-1}{p}}\frac{\nabla_{H} v}{\nabla_{H}dist(\xi,\partial \mathbb{H}^+)},
	\end{equation*}
	then we have the following lower estimate
	\begin{align*}
	E_p[u]  & \geq \int_{\mathbb{H}^+} |\mathcal{W}(\xi)|^p(|A+B|^p - |A|^p) d\xi  \\
	&\geq C_p  \int_{\mathbb{H}^+} dist(\xi,\partial \mathbb{H}^+)^{p-1}|\nabla_{H} v|^p \frac{\mathcal{W}(\xi)^p}{|\nabla_{H} dist(\xi,\partial \mathbb{H}^+)|^p} d\xi \\
	& + \left(\frac{p-1}{p}\right)^{p-1} \int_{\mathbb{H}^+}|\mathcal{W}(\xi)|^p|\nabla_{H} dist(\xi,\partial \mathbb{H}^+)|^{p-2} \left(\nabla_{H}dist(\xi,\partial \mathbb{H}^+) \cdot \nabla_{H} |v|^p\right) d\xi \\ 
	& \geq C_p \int_{\mathbb{H}^+} dist(\xi,\partial \mathbb{H}^+)^{p-1}|\nabla_{H} v|^p d\xi.
	\end{align*}
	In the last line we have used \eqref{est} and we dropped the last term on the right-hand side. This completes the proof of Lemma \ref{lem1}.
\end{proof}
\subsection{Geometric Hardy-Sobolev inequality in the half-space on $\mathbb{H}^n$}
Now we are ready to obtain the geometric Hardy-Sobolev inequality in the half-space on the Heisenberg group $\mathbb{H}^n$.
\begin{thm}\label{thm} 
	Let $\mathbb{H}^+$ be a half-space of the Heisenberg group $\mathbb{H}^n$. Then for every function $u \in C_0^{\infty}(\mathbb{H}^+)$ and $2\leq p<Q$ with $Q=2n+1$, there exists some $C>0$ such that we have
	\begin{equation}
	\left( \int_{\mathbb{H}^+} |\nabla_{H} u|^p d\xi - \left(\frac{p-1}{p}\right)^p \int_{\mathbb{H}^+} \frac{ \mathcal{W}(\xi)^p}{dist(\xi,\partial \mathbb{H}^+)^p} |u|^p d\xi \right)^{\frac{1}{p}} \geq C \left( \int_{\mathbb{H}^+} |u|^{p^*} d\xi\right)^{\frac{1}{p^*}},
	\end{equation}
	where $p^* := Qp/(Q-p)$ and $dist(\xi,\partial \mathbb{H}^+) := \langle \xi, \nu \rangle - d$ is the distance from $\xi$ to the boundary.
\end{thm}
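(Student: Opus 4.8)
The plan is to derive the Hardy--Sobolev inequality by combining the improved geometric Hardy inequality of Lemma \ref{lem1} with a weighted Folland--Stein--Sobolev inequality on the half-space. First, since $|\nabla_{H}dist(\xi,\partial\mathbb{H}^+)|^p=\mathcal{W}(\xi)^p$ (established in the proof of Theorem \ref{thm1} and in \eqref{est}), the left-hand side of the claimed inequality, raised to the power $p$, is precisely the remainder $E_p[u]$ appearing in Lemma \ref{lem1}. Writing $u(\xi)=dist(\xi,\partial\mathbb{H}^+)^{\frac{p-1}{p}}v(\xi)$ --- where $v\in C_0^\infty(\mathbb{H}^+)$, since $u$ has compact support inside the open half-space, on which $dist$ is smooth and bounded below --- Lemma \ref{lem1} gives
\[
E_p[u]\ \ge\ C_p\int_{\mathbb{H}^+}dist(\xi,\partial\mathbb{H}^+)^{p-1}|\nabla_{H}v|^p\,d\xi ,\qquad C_p=(2^{p-1}-1)^{-1},
\]
so that it suffices to establish the critical weighted Sobolev inequality
\[
\int_{\mathbb{H}^+}dist(\xi,\partial\mathbb{H}^+)^{p-1}|\nabla_{H}v|^p\,d\xi\ \ge\ C\left(\int_{\mathbb{H}^+}\big|dist(\xi,\partial\mathbb{H}^+)^{\frac{p-1}{p}}v\big|^{p^*}\,d\xi\right)^{p/p^*}
\]
for all $v\in C_0^\infty(\mathbb{H}^+)$. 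A scaling check on a dilation-invariant model half-space, using that $X_i,Y_i$ are homogeneous of degree one and $dist$ of degree one, confirms that $p^*=Qp/(Q-p)$ is exactly the exponent matched to the weight $dist^{\,p-1}$.

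To prove this weighted inequality I would adapt the Euclidean scheme of Filippas--Maz'ya--Tertikas \cite{FMT} and Frank--Loss \cite{FL} to the sub-Riemannian setting. After a preliminary reduction to a model half-space by an automorphism of $\mathbb{H}^n$ and a translation, the idea is to use the Folland--Stein inequality $\|f\|_{L^{p^*}(\mathbb{H}^n)}\le C_{FS}\|\nabla_{H}f\|_{L^p(\mathbb{H}^n)}$, valid for $f\in C_0^\infty(\mathbb{H}^n)$ and $1<p<Q$, on the pieces of the dyadic decomposition $\mathbb{H}^+=\bigcup_{k}S_k$ with $S_k=\{\,2^k\le dist(\xi,\partial\mathbb{H}^+)<2^{k+1}\,\}$, on each of which the weight $dist^{\,p-1}$ is comparable to the constant $2^{k(p-1)}$. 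Applying Folland--Stein to $v$ times a cutoff adapted to $S_k$, raising the resulting estimate to the power $p^*$, summing over $k$ by the elementary embedding $\ell^p\hookrightarrow\ell^{p^*}$ (available because $p\le p^*$) together with the bounded overlap of the $S_k$, and re-summing the slab contributions against $\int_{\mathbb{H}^+}dist^{\,p-1}|\nabla_{H}v|^p$, yields the inequality modulo the treatment of the lower-order terms. For $p=2$ one has $C_2=1$, the remainder equals $\int_{\mathbb{H}^+}dist\,|\nabla_{H}v|^2$ exactly, and the scheme reproduces the Hardy--Sobolev--Maz'ya inequality on $\mathbb{H}^n$.

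The main obstacle is the control of the zeroth-order error terms produced by the cutoffs. With cutoffs that are functions of $dist(\xi,\partial\mathbb{H}^+)$ alone, these contribute a quantity comparable to $\int_{\mathbb{H}^+}\mathcal{W}(\xi)^p\,dist(\xi,\partial\mathbb{H}^+)^{-p}|u|^p\,d\xi$, which is precisely the Hardy potential that appears, with its sharp constant, inside $E_p[u]$; it therefore \emph{cannot} be absorbed by any crude estimate into either side. To close the argument one has to exploit cancellation between neighbouring slabs --- a telescoping in the normal direction (in the Euclidean model this becomes transparent after the substitution $\tau=\log dist$, which flattens the normal part of the weighted energy to $\int|\partial_\tau v|^p$ and turns the target into an exponentially weighted integral) --- combined with the Folland--Stein gain in the tangential directions. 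Implementing this on the Heisenberg group, where the normal and tangential components of $\nabla_{H}$ do not decouple cleanly and the angle function $\mathcal{W}$ enters, is where the bulk of the work lies; once the error terms are under control, the reduction via Lemma \ref{lem1} and the summation over the slabs are routine.
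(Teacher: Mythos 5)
Your reduction via Lemma \ref{lem1} is the correct first step (and coincides with part of the paper's strategy), but the heart of the matter --- the weighted Sobolev inequality $\int_{\mathbb{H}^+}dist(\xi,\partial\mathbb{H}^+)^{p-1}|\nabla_H v|^p\,d\xi\ \ge\ C\bigl(\int_{\mathbb{H}^+}|u|^{p^*}d\xi\bigr)^{p/p^*}$ with $u=dist^{\frac{p-1}{p}}v$ --- is left unproven. Your dyadic-slab scheme with cutoffs and the $L^p$ Folland--Stein inequality runs into precisely the obstruction you yourself identify: the cutoff error terms regenerate the Hardy potential $\int \mathcal{W}^p\,dist^{-p}|u|^p$ with its sharp constant and cannot be absorbed, and the required cancellation between neighbouring slabs (your ``telescoping in the normal direction'', complicated on $\mathbb{H}^n$ by the fact that the normal and tangential parts of $\nabla_H$ do not decouple and by the presence of the angle function $\mathcal{W}$) is deferred rather than carried out. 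As written this is a programme, not a proof; moreover the scaling heuristic you invoke to fix $p^*$ is shaky, since $dist(\xi,\partial\mathbb{H}^+)=\langle\xi,\nu\rangle-d$ is a Euclidean affine function and is not homogeneous of degree one under the parabolic dilations unless $\nu_t=0$.

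The paper closes exactly this gap by a different and much shorter route that avoids any decomposition or cutoffs. It starts from the endpoint $L^1$ Sobolev (Gagliardo--Nirenberg) inequality on $\mathbb{H}^n$ of Baldi--Franchi--Pansu, applied to $g=|u|^{p^*(1-1/Q)}$, which gives $C_1\bigl(\int_{\mathbb{H}^+}|u|^{p^*}d\xi\bigr)^{\frac{Q-1}{Q}}\le\int_{\mathbb{H}^+}|u|^{p^*(1-1/p)}|\nabla_H u|\,d\xi$. The same ground transform $u=dist^{\frac{p-1}{p}}v$ splits the right-hand side as $I_1+\frac{p-1}{p}I_2$; an integration by parts using the identities $\mathcal{L}\,dist(\xi,\partial\mathbb{H}^+)=0$ and $\langle\nabla_H dist,\nabla_H|\nabla_H dist|\rangle=0$ (both checked explicitly from $\langle X_i(\xi),\nu\rangle=\nu_{x,i}+2y_i\nu_t$, $\langle Y_i(\xi),\nu\rangle=\nu_{y,i}-2x_i\nu_t$) shows $I_2\le\frac{p}{p-1}I_1$, and $I_1$ is then estimated by H\"older and Lemma \ref{lem1} by $C_p^{-1/p}\bigl(\int|u|^{p^*}\bigr)^{1-1/p}E_p[u]^{1/p}$; collecting exponents gives the theorem. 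If you wish to pursue your route you must actually prove the weighted Sobolev inequality on the half-space, which is essentially as hard as the theorem itself; the $L^1$-Sobolev argument converts that difficulty into the elementary algebraic cancellations above, so you should either adopt it or supply the missing slab-cancellation argument in full.
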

\begin{rem}
	Note that for $p=2$ we have the Hardy-Sobolev-Maz'ya inequality in the following form
	\begin{equation}\label{eq1}
	\left( \int_{\mathbb{H}^+} |\nabla_{H} u|^2 d\xi - \frac{1}{4}\int_{\mathbb{H}^+} \frac{ \mathcal{W}(\xi)^2}{dist(\xi,\partial \mathbb{H}^+)^2} |u|^2 d\xi \right)^{\frac{1}{2}} \geq C \left( \int_{\mathbb{H}^+} |u|^{2^*} d\xi\right)^{\frac{1}{2^*}},
	\end{equation}
	where $2^*:=2Q/(Q-2).$
\end{rem}

\begin{proof}[Proof of Theorem \ref{thm}]
	Our key ingredient of proving the Hardy-Sobolev inequality in the half-space of $\mathbb{H}^n$ is the $L^1$-Sobolev inequality, or the Gagliardo-Nirenberg inequality. It has been established on the Heisenberg group by Baldi, Franchi, Pansu in \cite{BFP}. 
	
	The $L^1$-Sobolev inequality on the Heisenberg group follows in the form
	\begin{equation*}
	c \left( \int_{\mathbb{H}^n}|g|^{\frac{Q}{Q-1}} d\xi \right)^{\frac{Q-1}{Q}} \leq \int_{\mathbb{H}^n} |\nabla_{H} g| d\xi,
	\end{equation*}
	for some $c>0$, for every function $g \in W^{1,1}(\mathbb{H}^n)$. Now let us set $g = |u|^{p^*(1-1/Q)}$, then we obtain
	\begin{align*}
	c \left( \int_{\mathbb{H}^+}|u|^{p^*} d\xi\right)^{\frac{Q-1}{Q}} &\leq \left|\frac{p(Q-1)}{Q-p} \right| \int_{\mathbb{H}^+} |u|^{\frac{Q(p-1)}{Q-p}} |\nabla_{H} |u|| d\xi,\\ \nonumber
	& \leq \left|\frac{p(Q-1)}{Q-p} \right| \int_{\mathbb{H}^+} |u|^{\frac{Qp}{Q-p}\frac{(p-1)}{p}} |\nabla_{H} u| d\xi, \\
	& = \left|\frac{p(Q-1)}{Q-p} \right| \int_{\mathbb{H}^+} |u|^{p^*(1-1/p)}|\nabla_{H} u| d\xi. 
	\end{align*}
	We have used $|\nabla_{H}|u||\leq |\nabla_{H} u|$ (see \cite[Proof of Thoerem 2.1]{RSS_Lp}). Then we arrive at 
	\begin{equation}\label{est3}
		C_1 \left( \int_{\mathbb{H}^+}|u|^{p^*} d\xi\right)^{\frac{Q-1}{Q}} \leq  \int_{\mathbb{H}^+} |u|^{p^*(1-1/p)}|\nabla_{H} u| d\xi,
	\end{equation}
	where $C_1:=c\left|\frac{Q-p}{p(Q-1)}\right|>0$. Let us estimate the right-hand side of inequality \eqref{est3}. Again we use a ground transform $u(\xi) = dist(\xi,\partial \mathbb{H}^+)^{\frac{p-1}{p}}v(\xi)$ which leads to 
	\begin{align*}
	&\int_{\mathbb{H}^+} |u|^{p^*(1-1/p)} |\nabla_{H} u| d\xi\\
	& = \int_{\mathbb{H}^+} |u|^{p^*(1-1/p)} \left| dist(\xi,\partial \mathbb{H}^+)^{\frac{p-1}{p}} \nabla_{H}v  + \frac{p-1}{p} dist(\xi,\partial \mathbb{H}^+)^{-1/p} \nabla_{H}dist(\xi,\partial \mathbb{H}^+) v \right| d\xi \\
	& \leq \int_{\mathbb{H}^+} |u|^{p^*(1-1/p)} dist(\xi,\partial \mathbb{H}^+)^{\frac{p-1}{p}} |\nabla_{H} v| d\xi \\
	&+ \frac{p-1}{p} \int_{\mathbb{H}^+} dist(\xi,\partial \mathbb{H}^+)^{p^*(1-1/p)^2-1/p} |\nabla_{H} dist(\xi,\partial \mathbb{H}^+)| |v|^{p^*(1-1/p)+1} d\xi \\
	& = I_1 +  \frac{p-1}{p} I_2.
	\end{align*} 
	In the last line we have denoted two integrals by $I_1$ and $I_2$, respectively. Also, for simplification we denote $\alpha:= p^*(1-1/p)^2 +1-1/p$.
	First, we estimate $I_2$ using integrations by parts 
	\begin{align*}
	I_2 &= \int_{\mathbb{H}^+} dist(\xi,\partial \mathbb{H}^+)^{\alpha-1} |\nabla_{H}dist(\xi,\partial \mathbb{H}^+)| |v|^{\alpha p/(p-1)} d\xi \\
	& = \frac{1}{\alpha}  \int_{\mathbb{H}^+} \langle \nabla_{H} dist(\xi,\partial \mathbb{H}^+)^{\alpha}, \nabla_{H}dist(\xi,\partial \mathbb{H}^+)\rangle  \frac{|v|^{\alpha p/(p-1)}}{|\nabla_{H}dist(\xi,\partial \mathbb{H}^+)|}  d\xi\\
	& = -\frac{1}{\alpha} \int_{\mathbb{H}^+} dist(\xi,\partial \mathbb{H}^+)^{\alpha}  \nabla_{H} \left(  \frac{\nabla_{H}dist(\xi,\partial \mathbb{H}^+)|v|^{\alpha p/(p-1)}}{|\nabla_{H}dist(\xi,\partial \mathbb{H}^+)|}   \right)d\xi \\
	&=  -\frac{1}{\alpha} \int_{\mathbb{H}^+} dist(\xi,\partial \mathbb{H}^+)^{\alpha} \times \\
	& \left( \frac{\nabla_{H}dist(\xi,\partial \mathbb{H}^+)\nabla_{H}|v|^{\alpha p/(p-1)}}{|\nabla_{H}dist(\xi,\partial \mathbb{H}^+)|} 
	- \frac{\langle \nabla_{H}dist(\xi,\partial \mathbb{H}^+),\nabla_{H}|\nabla_{H}dist(\xi,\partial \mathbb{H}^+)|\rangle |v|^{\alpha p/(p-1)}}{|\nabla_{H}dist(\xi,\partial \mathbb{H}^+)|^2} \right) d\xi\\
	& =  -\frac{1}{\alpha} \int_{\mathbb{H}^+} dist(\xi,\partial \mathbb{H}^+)^{\alpha} \frac{\nabla_{H}dist(\xi,\partial \mathbb{H}^+)\nabla_{H}|v|^{\alpha p/(p-1)}}{|\nabla_{H}dist(\xi,\partial \mathbb{H}^+)|} d\xi\\	
	& \leq -\frac{p}{p-1} \int_{\mathbb{H}^+} dist(\xi,\partial \mathbb{H}^+)^{\alpha} |v|^{\alpha p/(p-1)-1} |\nabla_{H} v|d\xi\\
	& = -\frac{p}{p-1} \int_{\mathbb{H}^+}|u|^{p^*(1-1/p)} dist(\xi,\partial \mathbb{H}^+)^{\frac{p-1}{p}} |\nabla_{H}v| d\xi\\
	& \leq \frac{p}{p-1} I_1.
	\end{align*}
	We have used $|\nabla_{H}|u||\leq |\nabla_{H}u|$, and
	\begin{align*}
		\mathcal{L}dist(\xi,\partial \mathbb{H}^+)= \sum_{i=1}^{n}X_i\langle X_i(\xi),\nu \rangle + Y_i\langle Y_i(\xi),\nu \rangle =0,
	\end{align*} 
	since 
	\begin{align*}
	&\langle X_i(\xi),\nu\rangle= \nu_{x,i}+2y_i\nu_t, & &\langle Y_i(\xi),\nu\rangle= \nu_{y,i}-2x_i\nu_t, \\
	& X_i\langle X_i(\xi),\nu\rangle=0,& & Y_i\langle Y_i(\xi),\nu\rangle=0,\\
	& Y_i\langle X_i(\xi),\nu\rangle=2 \nu_t, && X_i\langle Y_i(\xi),\nu\rangle=-2 \nu_t,
	\end{align*}
	where $\xi:=(x,y,t)$ with $x,y \in \mathbb{R}^n$ and $t \in \mathbb{R}$, $\nu := (\nu_x,\nu_y,\nu_t)$ with $\nu_x:=(\nu_{x,1},\ldots \nu_{x,n})$ and $\nu_y:=(\nu_{y,1},\ldots \nu_{y,n})$.
	Also we have
	\begin{multline*}
		\langle \nabla_{H}dist(\xi,\partial \mathbb{H}^+),\nabla_{H}|\nabla_{H}dist(\xi,\partial \mathbb{H}^+)|\rangle= \frac{2\nu_t}{|\nabla_{H}dist(\xi,\partial \mathbb{H}^+)|}\\
		 ( \underset{n}{\underbrace{(2x_1\nu_t -\nu_{y,1})(\nu_{x,1}+2y_1\nu_t)+\ldots+(2x_n\nu_t -\nu_{y,n})(\nu_{x,n}+2y_n\nu_t)}} \\ 
		 +\underset{n}{\underbrace{(\nu_{y,1} -2x_1\nu_t )(\nu_{x,1}+2y_1\nu_t)+\ldots+(\nu_{y,n}-2x_n\nu_t )(\nu_{x,n}+2y_n\nu_t)}}  )=0,
	\end{multline*}
	since
	\begin{multline*}
		\nabla_{H}|\nabla_{H}dist(\xi,\partial \mathbb{H}^+)|=(X_1|\nabla_{H}dist(\xi,\partial \mathbb{H}^+)|,\ldots, X_n|\nabla_{H}dist(\xi,\partial \mathbb{H}^+)|,\\Y_1|\nabla_{H}dist(\xi,\partial \mathbb{H}^+)|,\ldots,Y_n|\nabla_{H}dist(\xi,\partial \mathbb{H}^+)|)\\
		= \frac{2\nu_t}{|\nabla_{H}dist(\xi,\partial \mathbb{H}^+)|}(\underset{n}{\underbrace{2x_1\nu_t-\nu_{y,1},\ldots, 2x_n\nu_t-\nu_{y,n}}},\underset{n}{\underbrace{\nu_{x,1}+2y_1\nu_t,\ldots,\nu_{x,n}+2y_n\nu_t}}),
	\end{multline*}
	and 
	\begin{equation*}
		\nabla_{H}dist(\xi,\partial \mathbb{H}^+)= (\underset{n}{\underbrace{\nu_{x,1}+2y_1\nu_t,\ldots,\nu_{x,n}+2y_n\nu_t}},\underset{n}{\underbrace{\nu_{y,1}-2x_1\nu_t,\ldots,\nu_{y,n}-2x_n\nu_t}}).
	\end{equation*}
 As we see that integral $I_2$ can be estimated by integral $I_1$. From this estimation we know that 
	\begin{equation}\label{est2}
	\int_{\mathbb{H}^+} |u|^{p^*(1-1/p)} |\nabla_{H} u| d\xi \leq 2I_1.
	\end{equation}
	Now it comes to estimate $I_1$ by using the H\"older inequality 
	\begin{align*}
	I_1 &= \int_{\mathbb{H}^+} \left\{ |u|^{p^*(1-1/p)} \right\} \left\{dist(\xi,\partial \mathbb{H}^+)^{(p-1)/p} |\nabla_{H} v| \right\} d\xi \\
	& \leq \left( \int_{\mathbb{H}^+} |u|^{p^*} d\xi\right)^{1-1/p}  \left(  \int_{\mathbb{H}^+} dist(\xi,\partial \mathbb{H}^+)^{p-1} |\nabla_{H} v|^p  d\xi  \right)^{1/p}\\
	& \leq C_p^{-1/p} \left( \int_{\mathbb{H}^+} |u|^{p^*} d\xi\right)^{1-1/p}  \left(\int_{\mathbb{H}^+} |\nabla_{H}u|^p d\xi - \left(\frac{p-1}{p}\right)^p \int_{\mathbb{H}^+} \frac{ \mathcal{W}(\xi)^p}{dist(\xi,\partial \mathbb{H}^+)^p} |u|^p d\xi\right)^{1/p}.
	\end{align*}
	In the last line we have used Lemma \ref{lem1}.
	Inserting the estimate of $I_1$ in \eqref{est2}, we arrive at 
	\begin{align*}
	&\int_{\mathbb{H}^+} |u|^{p^*(1-1/p)}|\nabla_{H} u| d\xi \leq \\
	&2C_p^{-1/p} \left( \int_{\mathbb{H}^+} |u|^{p^*} d\xi\right)^{1-1/p}  \left(\int_{\mathbb{H}^+} |\nabla_{H}u|^p d\xi - \left(\frac{p-1}{p}\right)^p \int_{\mathbb{H}^+} \frac{ \mathcal{W}(\xi)^p}{dist(\xi,\partial \mathbb{H}^+)^p} |u|^p d\xi\right)^{\frac{1}{p}}.
	\end{align*}
	Plugging the above estimate in \eqref{est3}, we have 
	\begin{align*}
	C_1 \left( \int_{\mathbb{H}^+}|u|^{p^*} d\xi\right)^{\frac{Q-1}{Q}} &\leq 2C_p^{-1/p} \left( \int_{\mathbb{H}^+} |u|^{p^*} d\xi\right)^{\frac{p-1}{p}}   \\
	&\left(\int_{\mathbb{H}^+} |\nabla_{H}u|^p d\xi - \left(\frac{p-1}{p}\right)^p \int_{\mathbb{H}^+} \frac{ \mathcal{W}(\xi)^p}{dist(\xi,\partial \mathbb{H}^+)^p} |u|^p d\xi\right)^{\frac{1}{p}}.
	\end{align*}
	By collecting terms, we finish the proof of Theorem \ref{thm}.
\end{proof}
\subsection{Consequence of Theorem \ref{thm}} 
Let us demonstrate our result in a particular case when $p=2$:

\begin{cor}\label{cor1}
	Let $\mathbb{H}^+:=\{ \xi =(x,y,t) \in \mathbb{H}^n | \,\, t>0 \}$ be a half-space of the Heisenberg group $\mathbb{H}^n$. Then for every function $u \in C_0^{\infty}(\mathbb{H}^+)$ taking $d=0$ we have 
	\begin{equation}\label{3.10}
	\left( \int_{\mathbb{H}^+} |\nabla_{H} u|^2 d\xi - \int_{\mathbb{H}^+} \frac{ |x|^2 + |y|^2}{t^2} |u|^2 d\xi \right)^{\frac{1}{2}} \geq C \left( \int_{\mathbb{H}^+} |u|^{2^*} d\xi\right)^{\frac{1}{2^*}},
	\end{equation}
	where $2^*:=2Q/(Q-2), \,\, Q=2n+2$, with $C>0$ independent of $u$. 
\end{cor}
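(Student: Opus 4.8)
The plan is to deduce Corollary~\ref{cor1} as a direct specialisation of Theorem~\ref{thm} to the exponent $p=2$ and to the concrete half-space $\mathbb{H}^+=\{\xi=(x,y,t):t>0\}$. In the notation of the paper this half-space corresponds to the choice $\nu=(0,\dots,0,1)$, i.e. $\nu_x=\nu_y=0$ and $\nu_t=1$, together with $d=0$; indeed then $\langle\xi,\nu\rangle=t$ and hence $dist(\xi,\partial\mathbb{H}^+)=\langle\xi,\nu\rangle-d=t$, which is the weight denominator appearing in~\eqref{3.10}.

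The only computation required is that of the angle function $\mathcal{W}$ for this particular normal. Using the identities already recorded in the proof of Corollary~\ref{cor12}, namely $\langle X_i(\xi),\nu\rangle=\nu_{x,i}+2y_i\nu_t$ and $\langle Y_i(\xi),\nu\rangle=\nu_{y,i}-2x_i\nu_t$, the present choice of $\nu$ yields $\langle X_i(\xi),\nu\rangle=2y_i$ and $\langle Y_i(\xi),\nu\rangle=-2x_i$, so that
\[
\mathcal{W}(\xi)^2=\sum_{i=1}^{n}\big(\langle X_i(\xi),\nu\rangle^2+\langle Y_i(\xi),\nu\rangle^2\big)=4\sum_{i=1}^{n}(x_i^2+y_i^2)=4\big(|x|^2+|y|^2\big).
\]

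Next I would substitute $p=2$ into the inequality of Theorem~\ref{thm}. Then $\left(\frac{p-1}{p}\right)^p=\tfrac14$, and this factor is exactly cancelled by the factor $4$ coming from $\mathcal{W}(\xi)^2$, producing the potential
\[
\Big(\tfrac{p-1}{p}\Big)^{p}\,\frac{\mathcal{W}(\xi)^{2}}{dist(\xi,\partial\mathbb{H}^+)^{2}}=\frac14\cdot\frac{4(|x|^2+|y|^2)}{t^2}=\frac{|x|^2+|y|^2}{t^2},
\]
which is precisely the weight in~\eqref{3.10}. Since the homogeneous dimension of $\mathbb{H}^n$ is $Q=2n+2$, the exponent $p^*=Qp/(Q-p)$ reduces to $2^*=2Q/(Q-2)$ when $p=2$. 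Hence Theorem~\ref{thm}, applied with these data, gives exactly inequality~\eqref{3.10} with the same constant $C>0$, which completes the proof.

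I do not anticipate any genuine obstacle: the corollary is a pure specialisation of Theorem~\ref{thm}, and the single point deserving attention is the (pleasant) cancellation of the constant $\left(\frac{p-1}{p}\right)^p=\tfrac14$ against $|\nabla_H dist(\xi,\partial\mathbb{H}^+)|^2=\mathcal{W}(\xi)^2=4(|x|^2+|y|^2)$ for this half-space, which is what reduces the abstract geometric potential to the explicit weight $(|x|^2+|y|^2)/t^2$ of~\eqref{LY_ineq}.
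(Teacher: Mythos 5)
Your proposal is correct and follows essentially the same route as the paper: specialise Theorem \ref{thm} (equivalently the $p=2$ case \eqref{eq1}) to $\nu=(0,\ldots,0,1)$, $d=0$, compute $\langle X_i(\xi),\nu\rangle=2y_i$, $\langle Y_i(\xi),\nu\rangle=-2x_i$, hence $\mathcal{W}(\xi)^2/dist(\xi,\partial\mathbb{H}^+)^2=4(|x|^2+|y|^2)/t^2$, and observe the cancellation of the factor $\tfrac14$ against the $4$. This is exactly the paper's argument, so nothing further is needed.
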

\begin{proof}[Proof of Corollary \ref{cor1}]
	We have the following left-invariant vector fields 
	\begin{align*}
	X_i = \frac{\partial }{\partial x_i} + 2y_i \frac{\partial }{\partial t} \,\,\, \text{and} \,\,\,
	 Y_i = \frac{\partial }{\partial y_i} - 2x_i \frac{\partial }{\partial t},	
	\end{align*}
	with the commutator 
	\begin{equation*}
	[X_i,Y_i] = -4 \frac{\partial }{\partial t}.
	\end{equation*}
	Then for $\xi = (x_1,\ldots,x_n,y_1,\ldots,y_n,t)$ and $\nu=(\overset{n}{\overbrace{0,\ldots,0}},\overset{n}{\overbrace{0,\ldots,0}},1)$, we get
	\begin{align*}
	\langle X_i(\xi), \nu \rangle = 2y_i \,\,\, \text{and}\,\,\, \langle Y_i(\xi), \nu \rangle = -2x_i,
	\end{align*}
	where 
	\begin{align*}
 &X_i(\xi) = (\overset{i}{\overbrace{0,\ldots,1}},\ldots,0,\overset{n}{\overbrace{0,\ldots,0}},2y_i), \\ 
 & Y_i(\xi) = (\overset{n}{\overbrace{0,\ldots,0}},\overset{i}{\overbrace{0,\ldots,1}},\ldots,0,,-2x_i).
	\end{align*}
	Thus, we arrive at
	\begin{equation}
	\frac{\mathcal{W}(\xi)^2}{dist(\xi,\partial \mathbb{H}^+)^2} = 4\frac{|x|^2+|y|^2}{t^2}.
	\end{equation}
	Plugging the above expression into inequality \eqref{eq1} we obtain
	\begin{equation*}
	\left( \int_{\mathbb{H}^+} |\nabla_{H} u|^2 d\xi - \int_{\mathbb{H}^+} \frac{ |x|^2 + |y|^2}{t^2} |u|^2 d\xi \right)^{\frac{1}{2}} \geq C \left( \int_{\mathbb{H}^+} |u|^{2^*} d\xi\right)^{\frac{1}{2^*}},
	\end{equation*} 
	showing \eqref{3.10}.
\end{proof}


\begin{thebibliography}{NZW01}
	
	\bibitem{Ancona}
	Ancona A.:
	\newblock On strong barriers and an inequality of Hardy for domains $\mathbb{R}^n$.
	\newblock \textit{J. London Math. Soc.} 43, 274--290 (1986) 
	
	\bibitem{Avka_Lap}
	Avkhadiev F.G. and Laptev A.:
	\newblock On a sharp Hardy inequality for convex domains.
	\newblock Springer; \textit{International Mathematical Series (New York)} 12, Around the Research of Vladimir Maz’ya I, 1–12 (2010) 
	
	\bibitem{BFT}
	Barbatis G., Filippas S., Tertikas A.: 
	\newblock A unified approach to improved $L^p$ Hardy inequalities with best constants.
	\newblock {\textit{Trans. Am. Math. Soc.}} 356(6), 2169--2196 (2004)
	
	\bibitem{BFP}
	Baldi A., Franchi B., Pansu P.:
	\newblock Gagliardo-Nirenberg inequalities for differential forms in Heisenberg groups.
	\newblock \textit{Mathematische Annalen}, 365, 1633--1667 (2016) 
	
	

	\bibitem{Davies}
	Davies E.B.: 
	\newblock A review of Hardy inequalities.
	\newblock The Maz'ya anniversary Collection. Vol 2. \textit{Oper. Theory Adv. Appl.} 55--67 (1999)
	
		\bibitem{FL}
	Frank R.L., Loss M.:
	\newblock The Hardy-Sobolev-Maz'ya inequalities for arbitrary domains. 
	\newblock\textit{ J. Math. Pures Appl.} 97(1), 39-54 (2012)
	

\bibitem{FMT}
Filippas S., Maz'ya V., Tertikas A.:
\newblock Critical Hardy-Sobolev inequalities.
\newblock \textit{J. Math. Pures Appl.} 87, 37--56 (2007)
	
	\bibitem{FR}
	Fischer V., Ruzhansky M.:
	\newblock Quantization on nilpotent Lie groups.
	\newblock \textit{Progress in Mathematics},  314, Birkh\"auser, (open access book) (2016)

	\bibitem{Garofalo}
	Garofalo N.:
	\newblock Geometric second derivative estimates in Carnot groups and convexity.
	\newblock \textit{Manuscripta Math.} 126, 353-–373 (2008)

	\bibitem{Hormander67}
	H\"ormander L.: 
	\newblock Hypoelliptic second order differential equations.
	\newblock \textit{Acta Math.} 119(1), 147-171 (1967)
	
	\bibitem{Larson}
	Larson S.: 
	\newblock Geometric Hardy inequalities for the sub-elliptic Laplacian on convex domain in the Heisenberg group.
	\newblock \textit{Bull. Math. Sci}. 6:335--352 (2016)
	

	
	\bibitem{Opic-Kuf}
	Opic B. and Kufner A.:
	\newblock Hardy-type inequalities.
	\newblock \textit{Pitman Research Notes in Mathematics Series}
	\newblock 219, (1990)
	
	
	\bibitem{Psaradakis}
	Psaradakis G.:
	\newblock Hardy-Sobolev-Maz'ya and related inequalities in the half space.
	\newblock\textit{ J. Elliptic Parabol. Equ.}, 3:127-134 (2017)
	\bibitem{RSS_geom}
	Ruzhansky M., Sabitbek B., Suragan D.:
	\newblock Subelliptic geometric Hardy type inequalities on half-spaces and convex domains.
	\newblock 	\href{https://arxiv.org/abs/1806.06226}{\textit{arXiv:1806.06226}} (2018)
	
	\bibitem{RSS_Lp}
	Ruzhansky M., Sabitbek B., Suragan D.:
	\newblock Weighted $L^p$-Hardy and $L^p$-Rellich inequalities with boundary terms on stratified Lie groups.
	\newblock \textit{\href{https://doi.org/10.1007/s13163-018-0268-3}{Revista Mathematica Complutense}} https://doi.org/10.1007/s13163-018-0268-3 (2018) 	
	
	\bibitem{RSS_2}
	Ruzhansky M., Sabitbek B., Suragan D.: 
	\newblock Hardy and Rellich inequalities for anisotropic p-sub-Laplacians, and horizontal Hardy inequalities for multiple singularities and multi-particles on stratified groups.
	\newblock 	\href{https://arxiv.org/abs/1803.09996}{\textit{arXiv:1803.09996}} (2018)
	
	\bibitem{RS_book}
	Ruzhansky M., Suragan D.:
	\newblock Hardy inequalities on homogeneous groups.
	\newblock \textit{Progress in Math.} Vol. 327, Birkh\"auser, 588 pp, (2019)
	\bibitem{RS17a}
	Ruzhansky M., Suragan D.:
	\newblock On horizontal Hardy, Rellich, Caffarelli-Kohn-Nirenberg and $p$-sub-Laplacian inequalities on stratified groups.
	\newblock  \textit{J. Differential Equations} 262, 1799--1821 (2017)
	
	\bibitem{RS17b}
	Ruzhansky M., Suragan D.:
	\newblock Layer potentials, {K}ac's problem, and refined {H}ardy inequality on
	homogeneous {C}arnot groups.
	\newblock  \textit{Adv. Math}. 308, 483--528 (2017)
	
	\bibitem{RS_local}
	Ruzhansky M., Suragan D.:
	\newblock Local Hardy and Rellich inequalities for sums of squares. 
	\newblock  \textit{ Adv. Diff. Equations} 22, 505--540  (2017)
	
	\bibitem{LY}
	Luan, J., Yang, Q.: 
	\newblock A Hardy type inequality in the half-space on $R^n$ and Heisenberg group. 
	\newblock \textit{J. Math. Anal. Appl.} 347(2), 645-651 (2008)
\end{thebibliography}
\end{document}